\theoremstyle{plain}%
 \newtheorem{theorem}{Theorem}
 \newtheorem{lemma}{Lemma}%
\theoremstyle{remark}
\newtheorem{remark}{Remark}
\theoremstyle{definition}
\newtheorem{example}{Example}
\newcommand{\cl}{{\rm Cl}}
\newcommand{\Ls}{{\rm Ls}}
\newcommand\nc{\newcommand}
\nc\gl{{\lambda}}
\nc{\bfs}{{\boldsymbol{\sl{s}}}}
\nc{\binn}{{\binom{2n}{n}}}
\nc\tx{{\texttt{x}}}
\nc\td{{\texttt{d}}}
\nc\ta{{\texttt{a}}}
\nc\ty{{\texttt{y}}}
\newcommand{\dotDelta}{{\vphantom{\Delta}\mathpalette\d@tD@lta\relax}}
\newcommand{\d@tD@lta}[2]{%
 \ooalign{\hidewidth$\m@th#1\mkern-1mu\cdot$\hidewidth\cr$\m@th#1\Delta$\cr}%
}
\begin{document}

\begin{center}
{\Large On a problem involving the squares of odd harmonic numbers}
\end{center}

\begin{center}

John M. Campbell\footnote{Corresponding author: {\tt jmaxwellcampbell@gmail.com}.  ORCID: 0000-0001-5550-2938}

{\footnotesize Department of Mathematics and Statistics, York University, 4700 Keele St, Toronto, Ontario, Canada M3J~1P3 }

\vspace{0.1in}

Paul Levrie

{\footnotesize Faculty of
 Applied Engineering, UAntwerpen, Groenenborgerlaan 171, 2020 Antwerp, Belgium, and Department of Computer Science, KU Leuven,
 Celestijnenlaan 200A, 3001 Heverlee (Leuven), Belgium, P.~O. Box 2402}

\vspace{0.1in}

Ce Xu

{\footnotesize School of Mathematics and Statistics, Anhui Normal University, Wuhu 241002, People's Republic of China}

\vspace{0.1in}

Jianqiang Zhao

{\footnotesize Department of Mathematics, The Bishop’s School, La Jolla, CA 92037, USA}

 \

\end{center}

\begin{abstract}
	\noindent
 We introduce a full solution to a problem considered by Wang and Chu concerning series involving the squares of finite sums of the form $1 + \frac{1}{3}
 + \cdots + \frac{1}{2n-1}$. Our proof involves techniques from the theory of colored multiple zeta values.
\end{abstract}

\noindent {\footnotesize \emph{MSC:} primary 11Y60 secondary 33B15}

\noindent {\footnotesize \emph{Keywords:} Harmonic sum, odd harmonic number, closed form, colored multiple zeta value}

\section{Introduction}
 As Adamchik has expressed \cite{Adamchik2002} (cf.\ \cite{CampbellChen2022}), there is a great history concerning the study of series of the following form:
\begin{equation}\label{Sfunction}
 S(r) = \sum_{k=0}^{\infty} \left( \frac{1}{16} \right)^{k} \binom{2k}{k}^2 \frac{ 1 }{ k + r }.
\end{equation}
 As in \cite{Adamchik2002,CampbellChen2022}, we recall that Ramanujan was interested in the $S$-function shown above \cite[p.\ 39]{Berndt1989}, and we
 record Ramanujan's identity whereby
 \begin{equation}\label{SfunctionRamanujan}
 S(r) = \frac{16^{r}}{ \pi r^2 \binom{2r}{r}^2 } \sum_{k=0}^{r - 1} \left( \frac{1}{16} \right)^{k} \binom{2k}{k}^2
 \end{equation}
 for $r \in \mathbb{N}_{0}$. Ramanujan's discoveries concerning the $S$-function in \eqref{Sfunction}, together with the large amount of research, over the
 years, concerning this function \cite{Adamchik2002} \cite[pp.\ 39--40]{Berndt1989}, 
 have inspired the development of \emph{Ramanujan-like} series given by introducing summand
 factors, such as harmonic or harmonic-type numbers, in the series shown in \eqref{Sfunction} \cite{Campbell2018,WangChu2020}. In this article, we solve a
 problem recently considered by Wang and Chu \cite{WangChu2022} concerning a family of series resembling \eqref{Sfunction} and involving harmonic-type
 numbers, and we introduce many related evaluations.

 We write $O_{n} = 1 + \frac{1}{3} + \cdots + \frac{1}{2 n - 1}$ to denote the $n^{\text{th}}$ odd harmonic number, by analogy with the usual notation
 $H_{n} = 1 + \frac{1}{2} + \cdots + \frac{1}{n}$ for the $n^{\text{th}}$ entry in the classical sequence of harmonic numbers. In the recent article
 \cite{WangChu2022}, Wang and Chu left the problem of evaluating series of the form
\begin{equation} \label{mainproblem}
 \sum_{n=1}^{\infty} \left( \frac{1}{16} \right)^{n} \binom{2n}{n}^2 \frac{O_{n}^2 }{(1 + 2n - 2 \lambda)^2}
\end{equation}
 as an open problem, letting $\lambda \in \mathbb{N}$. Even for the base case such that $\lambda = 1$, evaluating the series in \eqref{mainproblem} 
 is quite difficult, and it appears that the coefficient-extraction methods employed by 
 Chu et al.\ \cite{Chu2023,WangChu2020,WangChu2022} do not apply to
 \eqref{mainproblem}. We offer a complete solution to the problem of evaluating \eqref{mainproblem} for an arbitrary parameter $\lambda \in \mathbb{N}$.

\section{The base case}\label{sectionbase}
 We adopt notation from \cite{CampbellLevrieNimbran2021}, writing
\begin{equation}\label{Catalanlike}
 \mathcal{G} := \Im\left( \text{Li}_{3}\left( \frac{i+1}{2} \right) \right)
\end{equation}
 to denote the Catalan-like constant explored in \cite{CampbellLevrieNimbran2021}, letting $ G:= \sum_{n=1}^{\infty} \frac{(-1)^{n+1}}{(2n-1)^2} $
 denote the ``original'' Catalan constant, and writing $\text{Li}_{s}(z) = z + \frac{z^{2}}{2^{s}} + \frac{z^{3}}{3^{s}} + \cdots$ to denote the
 polylogarithm function. {\color{black} As discussed in \cite{CampbellLevrieNimbran2021},
 the trilogarithmic expression in \eqref{Catalanlike}
 has been considered as an irreducible constant in a variety of different applications and by a variety of different authors,
 and is a naturally occuring, higher-order analogue of
 both $G = \Im(\text{Li}_{2}(i))$ and
 the constant $\mathfrak{G} = \Im(\text{Li}_{2}(\sqrt[3]{-1}))$
 known as \emph{Gieseking's constant} \cite{Adams1998}.
 The uses
 of the irreducible constant in \eqref{Catalanlike} related to both quantum field theory
 \cite{Coffey2008} and the study of Euler-like sums \cite{CantariniDAurizio2019,SofoNimbran2020}
 further motivate Theorem \ref{maintheorem} below,
 which is the base case for our full solution to Wang and Chu's problem concerning \eqref{mainproblem}.
 Our full solution is highlighted as Theorem \ref{fullsolution} below. }

\begin{theorem}\label{maintheorem}
 The infinite series
 $$ \sum_{k=1}^{\infty} \left( \frac{1}{16} \right)^{k} \binom{2k}{k}^2
 \frac{O_{k}^{2}}{(2k-1)^2} $$
 admits the symbolic form
\begin{equation*}
 -\frac{8 G}{\pi }-\frac{8 G \ln (2)}{\pi }-\frac{48 \mathcal{G}}{\pi }+\frac{9 \pi
 ^2}{8}+\frac{\pi }{6}+\frac{4 \ln ^2(2)}{\pi }+\frac{3 \ln ^2(2)}{2}.
\end{equation*}
\end{theorem}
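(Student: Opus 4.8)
The plan is to convert the series into an integral representation and then reduce everything to known colored multiple zeta values of level $4$. The starting point will be the classical generating-function identity
\[
 \sum_{k=0}^{\infty} \left(\frac{1}{16}\right)^{k}\binom{2k}{k}^{2} x^{k} = \frac{2}{\pi}\,K(\sqrt{x}),
\]
together with the standard fact that $\left(\frac{1}{16}\right)^{k}\binom{2k}{k}^{2}$ is (up to the obvious normalization) the square of the Wallis ratio, so that the odd harmonic number $O_k$ appears naturally upon differentiating the Wallis product / the Beta integral $\int_0^{\pi/2}\sin^{2k}\theta\,d\theta$ with respect to a parameter. Concretely, I would first establish that $O_k$ can be produced by a logarithmic derivative: writing $\left(\frac{1}{16}\right)^k\binom{2k}{k}^2 = \frac{4}{\pi^2}\big(\int_0^{\pi/2}\cos^{2k}\theta\,d\theta\big)^2\cdot(\text{normalization})$ is not quite it, so instead I would use the cleaner route through the Wilf--Zeilberger identity alluded to in the abstract and in the remark preceding the theorem (the formula labelled \eqref{odd2kp1} in the paper), which should express $\left(\frac{1}{16}\right)^k\binom{2k}{k}^2 \frac{O_k^2}{(2k-1)^2}$ (or a suitable finite-difference antidote of it) in a form amenable to summation.

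Next I would carry out the summation. After inserting the integral representation for $\frac{1}{(2k-1)^2}$ — e.g. $\frac{1}{(2k-1)^2} = -\int_0^1 x^{2k-2}\ln x\,dx$ — and the representation of $O_k^2$ coming from squaring $O_k = \int_0^1 \frac{1-t^{2k}}{1-t^2}\,dt$, one is left with a double (or triple) integral whose inner sum is $\sum_k \left(\frac{1}{16}\right)^k\binom{2k}{k}^2 x^{2k} t^{2k} \cdots$, i.e. a combination of complete elliptic integrals $K$ and $E$ evaluated at products of the dummy variables. The key step is then to interchange sum and integrals (justified by positivity/absolute convergence since $\left(\frac{1}{16}\right)^k\binom{2k}{k}^2 \sim \frac{1}{\pi k}$) and to recognize the resulting iterated integral of elliptic-integral type as a period. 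I expect that after the substitution $x\mapsto \frac{2u}{1+u^2}$ (or a similar Landen-type change of variables that rationalizes $K$), the integrand becomes a rational combination of $\ln$, $\operatorname{arctan}$, and dilogarithmic kernels with singularities at the fourth roots of unity, so that the whole thing lands in the algebra of level-$4$ colored multiple zeta values.

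The final step is the evaluation of those colored MZVs in terms of the constants $G$, $\mathcal{G}$, $\ln 2$, $\pi$, and $\pi^2$. Here I would appeal directly to the tables and reduction results of Xu and Zhao \cite{XuZhao2022} (and the auxiliary evaluations from \cite{Campbell2022, CampbellLevrieNimbran2021}): the weight-$3$ level-$4$ colored MZV space is spanned by $\pi^3$, $\pi \ln^2 2$, $\pi G$ (equivalently $G\ln 2$ after the $\frac1\pi$ prefactor), $\mathcal{G}=\Im\operatorname{Li}_3\big(\frac{1+i}{2}\big)$, and a few products, and weight $\le 2$ contributes the $\pi^2$, $\pi$, and $\ln^2 2$ terms; collecting coefficients then yields the stated closed form. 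The main obstacle, I expect, is \emph{not} any single integral but the bookkeeping in step two: controlling the three nested integrations simultaneously, keeping track of which branch of $\sqrt{\cdot}$ / which sheet of the elliptic integral is in play after the Landen substitution, and making sure every intermediate divergence (the $k$-sum only converges like $\sum \frac1k$, and $O_k^2$ grows like $\ln^2 k$, so individual pieces will be logarithmically divergent and must be regularized and recombined) cancels correctly. A secondary obstacle is pinning down the exact rational coefficients — in particular the appearance of $\frac{\pi}{6}$, which suggests a stray elementary integral (likely $\int_0^1 \frac{\ln x}{\ldots}$ over a rational function with a simple pole) that must be isolated from the transcendental part.
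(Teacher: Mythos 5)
There is a genuine gap: what you have written is a strategy outline, not a proof, and the places where you defer to expectation are exactly where the difficulty of this theorem lives. Your central step --- insert $\frac{1}{(2k-1)^2}=-\int_0^1 x^{2k-2}\ln x\,dx$ and $O_k=\int_0^1\frac{1-t^{2k}}{1-t^2}\,dt$, sum the elliptic-integral generating function, apply a Landen-type substitution, and ``recognize'' the result as a combination of level-$4$ colored MZVs --- is never carried out, and it is not routine: squaring the integral representation of $O_k$ produces a three-fold integral against $\frac{2}{\pi}K$ whose reduction to an iterated-integral (word) form requires a concrete choice of $1$-forms and a verified change of variables, which is precisely the technical content of the Xu--Zhao machinery you cite. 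The paper's own sketch of this kind of reduction (Section \ref{subsectionalternate}) does it differently and more safely: it expands $O_k^2=2t_k(1,1)+t_k(2)$ by the stuffle relation and feeds the resulting multiple $t$-sums into the iterated-integral representation of \cite{XuZhao2022}, rather than going through $K(\sqrt{x})$ and Landen transformations; and even there the authors present it only as a sketch. Moreover, your fallback reference to ``the Wilf--Zeilberger identity \ldots the formula labelled \eqref{odd2kp1}'' is a mis-identification: \eqref{odd2kp1} is not the WZ identity but an evaluation of $\sum_k 16^{-k}\binom{2k}{k}^2 O_k/(2k+1)$ that itself has to be proved. The WZ input is \eqref{wantcentral}, and using it requires, in addition, the independent evaluations \eqref{Basel} and \eqref{odd2kp1}; the latter is obtained in the paper via the log-sine integral identity \eqref{Paulbrilliant} together with a colored-MZV computation of $\sum_k 16^{-k}\binom{2k}{k}^2 H_k/(2k-1)^3$ (leading to \eqref{afterJay}). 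Your proposal supplies none of these ingredients, nor any substitute computation that pins down the rational coefficients in the final closed form.

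Two smaller points. First, your regularization worry is misplaced: the series itself converges absolutely (the summand is $O(\ln^2 k/k^3)$), so any logarithmic divergences arise only from your own splitting of $O_k^2$ into pieces like $\int_0^1\frac{dt}{1-t^2}$; this is handleable, but since you flag it as ``the main obstacle'' without resolving it, it remains a gap in your argument rather than in the problem. Second, the claim that the relevant weight-$3$ level-$4$ space is spanned by ``$\pi^3$, $\pi\ln^2 2$, $\pi G$ (equivalently $G\ln 2$ \ldots)'' is garbled ($\pi G$ and $G\ln 2$ are independent constants); the correct statement is that the constants appearing, after division by $\pi$, are $\pi^3$, $\pi\ln^2 2$, $\pi^2\ln 2$ type products together with $G$, $G\ln 2$, $\mathcal{G}$ and lower-weight terms, and identifying the exact $\mathbb{Q}$-coefficients (e.g.\ the $\tfrac{\pi}{6}$ and $\tfrac{9\pi^2}{8}$ in the statement) is part of the proof, not an afterthought. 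In short, the overall direction (reduce to level-$4$ colored MZVs and use \cite{XuZhao2022} plus \cite{Au2020}) is viable and close in spirit to the paper's alternate approach, but as written the proposal omits every computation that would turn that direction into a proof of the stated evaluation.
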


\begin{proof}
 Using the Wilf--Zeilberger method \cite{PetkovsekWilfZeilberger1996}, Campbell \cite{Campbell2022} had recently proved the following equality, noting the
 appearance of a copy of the base case of \eqref{mainproblem} for $\lambda = 1$:
\begin{align}
 & 4 \sum _{k=0}^{\infty} \left( \frac{1}{16} \right)^{k} \binom{2k}{k}^2\frac{O_{k}}{2 k+1}
 - \sum _{k=0}^{\infty} \left( \frac{1}{16} \right)^{k} \binom{2k}{k}^2
 \frac{O_{k}^2}{2 k-1}-2 \sum _{k = 0}^{\infty} \left( \frac{1}{16} \right)^{k} \binom{2k}{k}^2 \frac{O_{k}^2}{(2 k-1)^2}
 \nonumber \\
 & \hspace{1cm} = \frac{12 G}{\pi }+\frac{32 \mathcal{G} }{\pi } - \frac{6 \ln ^2(2)}{\pi }
 - \frac{3 \pi ^2}{4}-\frac{\pi }{4} - \ln ^2(2). \label{wantcentral}
\end{align}
 Using a recursive proof approach, Campbell \cite{Campbell2022} had also evaluated the ``central'' series in \eqref{wantcentral}:
\begin{equation}\label{Basel}
 \sum _{k=0}^{\infty} \left( \frac{1}{16} \right)^{k} \binom{2k}{k}^2
 \frac{O_{k}^2}{2 k-1} = \frac{4G}{\pi} - \frac{\pi}{12} - \frac{2\ln^2 (2)}{\pi},
 \end{equation} 	
 giving us that the following equality holds:
\begin{align}
 & 4 \sum _{k=0}^{\infty}\left( \frac{1}{16} \right)^{k} \binom{2k}{k}^2
 \frac{O_{k} }{2 k+1} -
 2 \sum _{k=0}^{\infty} \left( \frac{1}{16} \right)^{k} \binom{2k}{k}^2\frac{
 O_{k}^{2} }{(2 k-1)^2} \label{nocentral} \\
 & = \frac{16 G}{\pi }+\frac{32 \mathcal{G} }{\pi }-\frac{3 \pi ^2}{4}-\frac{\pi }{3}-\frac{8 \ln ^2(2)}{\pi }-\ln ^2(2). \nonumber
\end{align}
 So, the problem of evaluating the base case for \eqref{mainproblem} is equivalent to the problem of evaluating $$ \sum _{k=0}^{\infty} \left( \frac{1}{16} \right)^{k} \binom{2k}{k}^2
 \frac{ O_{k} }{2 k+1}. $$ In this direction, we claim that the following equality holds:
\begin{equation}\label{Paulbrilliant}
 \sum _{k=0}^{\infty} \left( \frac{1}{16} \right)^{k} \binom{2k}{k}^2
 \frac{2 O_{k} - H_k}{2 k+1}
 = -\frac{16 \mathcal{G} }{\pi }+\frac{\pi ^2}{8}+\frac{8 G \ln (2)}{\pi }+\frac{\ln ^2(2)}{2}.
\end{equation}
 To prove this, we need the following integrals:
\begin{align}
& \int_{0}^{\pi/4} (\ln (\sin(t)) - \ln(\cos (t)) ) dt = - G, \label{eq:1} \\
 & \int_{0}^{\pi/4} (\ln^2 (\sin(t)) - \ln^2 (\cos (t)) ) dt = \frac{5\pi^3}{64}+\frac{1}{16}\pi \ln^2(2)-2\mathcal{G}+ G \ln(2). \label{eq:2}
\end{align}
 The first is a well known expression for Catalan's constant, whereas the second one can be found in \cite{CampbellLevrieNimbran2021}.
 We are to also use the following result \cite{NimbranLevrie2022}:
\begin{equation}\label{eq:7}
 \int_0^{\arcsin(x)}\ln^{n}(\sin u)\, {du} =
 \sum_{i=0}^{n} \left[(-1)^i i! \binom{n}{i} \ln^{n-i} (x) \sum_{k =
 0}^\infty \frac{\binom{2k}{k}\, x^{2k+1}}{(2k+1)^{i+1} \,2^{2k}}\right].
\end{equation}
 For $n=1$, \eqref{eq:7} reduces to: $$\int_0^{\arcsin(x)}\ln(\sin u)\, {du} = \ln (x) \sum_{k=0}^\infty \frac{\binom{2k}{k}\, x^{2k+1}}{(2 k +
 1) \,2^{2k}} - \sum_{k=0}^\infty \frac{\binom{2k}{k}\, x^{2k+1}}{(2k+1)^{2} \,2^{2k}}. $$
 We divide by $x$, replace $x$ by $\sin v$, and integrate the result between $0$ and $\tfrac{\pi}{2}$:
 \begin{align} 
 & \int_0^{\pi/2} \left(\frac{1}{\sin v} \int_0^v \ln(\sin u)\, {du} \right){dv} = \nonumber \\ 
 & \sum_{k=0}^\infty \frac{\binom{2k}{k}}{(2k+1) \,2^{2k}} \int_0^{\pi/2} \ln (\sin v) \sin^{2k} v \, dv 
 	- \frac{\pi}{2}\sum_{k=0}^\infty \frac{\binom{2k}{k}^2}{(2k+1)^{2} \,2^{4k}}. \label{eq:8} 
\end{align} 
 In the double integral on the left-hand side, we change the order of integration: 
\begin{align*}
\int_0^{\pi/2} \left(\frac{1}{\sin v} \int_0^v \ln(\sin u)\, {du} \right){dv} & = \int_0^{\pi/2} \left(\ln(\sin u) \int_u^{\pi/2} \frac{1}{\sin v}\, {dv} \right) {du} \\
& = -\int_0^{\pi/2} \ln(\sin u) \ln (\tan \tfrac{u}{2}) {du}.
\end{align*}
Using the substitution $u=2t$ the integral can be rewritten as:
\begin{align}
&-2 \int_0^{\pi/4} \left[ \ln 2 \cdot (\ln (\sin(t)) - \ln(\cos (t)) ) + \ln^2 (\sin(t)) - \ln^2 (\cos (t)) \right] {dt} \label{eq:9}\\
& \hspace{3cm}= -\frac{5}{32} \pi^3 - \frac{1}{8}\pi \ln^2 2 + 4 \mathcal{G} \nonumber
\end{align}
using \eqref{eq:1} and \eqref{eq:2}. The integral in 
 the right-hand side of \eqref{eq:8} is given in \cite[4.241.1]{GradshteynRyzhik2000}. 
 Explicitly, writing 
 $I_k:=\int_0^{\pi/2} \ln (\sin v) \sin^{2k} v \, dv$, we may 
 evaluate $I_{k}$ so that 
 $I_k = \frac{\pi}{2} \frac{1}{2^{2k}} \binom{2k}{k} (O_k - \frac{1}{2}H_k - \ln 2)$. 
 Hence, the right-hand side of \eqref{eq:8} reduces to: 
 $$\frac{\pi}{2}\sum_{k=0}^\infty \left( \frac{1}{16} \right)^{k} \binom{2k}{k}^2 (O_k - \frac{1}{2}H_k - \ln 2) \frac{1}{2k+1}
- \frac{\pi}{2}\sum_{k=0}^\infty \left( \frac{1}{16} \right)^{k} \binom{2k}{k}^2 \frac{1}{(2k+1)^{2} }.$$
 Multiplying both sides of \eqref{eq:8} by $\frac{4}{\pi}$ and using \eqref{eq:1}, \eqref{eq:2} and \eqref{eq:9} we obtain
 an equivalent form of \eqref{Paulbrilliant}.

 So, it remains to evaluate the following series:
\begin{equation}\label{remainsHk}
 \sum _{k=0}^{\infty} \left( \frac{1}{16} \right)^{k} \binom{2k}{k}^2
 \frac{H_{k} }{2 k+1}.
\end{equation}
 Using a reindexing argument, we obtain the following:
\begin{align*}
 & \sum _{k=0}^{\infty} \left( \frac{1}{16} \right)^{k} \binom{2k}{k}^2
 \frac{H_{k} }{2 k+1} = \sum _{k = 1}^{\infty} \left( \frac{1}{16} \right)^{k-1} \binom{2(k-1)}{k-1}^2
 \frac{H_{k-1}}{2 k-1} \\
 & = -\frac{4 (2 G - 1)}{\pi } +
 \sum _{k = 1}^{\infty} \left( \frac{1}{16} \right)^{k}
\binom{2 k}{k}^2 H_k\left(\frac{1}{(2k-1)^3}+\frac{2}{(2k-1)^2}+\frac{1}{2k-1}\right).
\end{align*}
 Using previously known Ramanujan-like series introduced in \cite{Campbell2018,Chen2016}, we obtain that the series in \eqref{remainsHk} equals:
 $$ \frac{24}{\pi }-\frac{8 G}{\pi }-\frac{24 \ln (2)}{\pi } + \sum _{k=1}^{\infty} \left( \frac{1}{16} \right)^{k} \binom{2 k}{k}^2 \frac{ H_k }{(2
 k-1)^3}. $$
 We claim that
\begin{equation}\label{equ:useItIntegral}
 \sum _{k=1}^{\infty}
 \left( \frac{1}{16} \right)^{k} \binom{2 k}{k}^2
 \frac{ H_k }{(2 k-1)^3}
\end{equation}
 equals $$ \frac{8 G}{\pi }-\frac{16 G \ln (2)}{\pi }-\frac{16 \mathcal{G} }{\pi }+\frac{5
 \pi^2}{8}-\frac{24}{\pi }+\frac{\ln ^2(2)}{2}+\frac{24 \ln (2)}{\pi }. $$
 To show this, we use an interated integral-based approach, using
 results from \cite{XuZhao2022}.
 We recall that for any real numbers $a,b$ and $1$-forms $f_j(t)\, dt$ ($j=1,\dots, d$)
 with $d>1$ we may recursively define the iterated integral
\begin{equation*}
\int_a^b f_1(t)\, dt\, f_2(t)\, dt \cdots f_d(t)\, dt:=\int_a^b f_1(u) \Big(\int_a^u f_2(t)\, dt \cdots f_d(t)\, dt \Big)\, du.
\end{equation*}
By the proof of \cite[Theorem 6.1]{XuZhao2022} we may use iterated integrals to compute \eqref{equ:useItIntegral} as follows:
\begin{align*}
I_s:=&\,\sum_{k>m>0} \left( \frac{1}{16} \right)^{k} \binom{2k}{k}^2\frac{1}{(2k-1)^{s+2}(2m)}\\
=&\,\frac{2}{\pi} \int_0^{\pi/2} \sin t\, dt (\cot t\,dt)^{s}\, \cot^2 t\,dt\,d(\sec t) \Big(\csc t-\cot t\Big) \,dt\\
=&\,\frac{2}{\pi} \int_0^{\pi/2} \sin t\, dt (\cot t\,dt)^s\, (\csc^2 t-1)\,dt\,d(\sec t) \Big(\csc t-\cot t\Big) \,dt
= X_s-Y_s-I_{s-1},
\end{align*}
where
\begin{align*}
X_s:=&\,\frac{2}{\pi} \int_0^{\pi/2} \sin t\, dt (\cot t\,dt)^s\, \sec t\,dt\, \Big(\csc t-\cot t\Big) \,dt,\\
Y_s:=&\, \frac{2}{\pi} \int_0^{\pi/2} \sin t\, dt (\cot t\,dt)^s\,dt\,d(\sec t) \Big(\csc t-\cot t\Big) \,dt.
\end{align*}
Set 1-forms $\ta=dt/t$, $\tx_\mu=dt/(\mu-t)$, $\td_{\mu,\nu}=\tx_{\mu}-\tx_{\nu}$ for any roots of unity $\mu$ and $\nu$, and $\ty=\tx_{i}+\tx_{-i}-\tx_{1}-\tx_{-1}$. Then
\begin{align*}
X_0=&\,\frac{2}{\pi} \int_0^{\pi/2} dt\, \Big(\csc t-\cot t\Big) \,dt
= \frac{2}{\pi} \int_0^1 i \Big(2\tx_{-1}-\tx_{i}-\tx_{-i}\Big) \td_{-i,i}, \\
X_1=&\,\frac{2}{\pi} \int_0^{\pi/2} (\cos t\cot t)\, dt \, \sec t\,dt\, \Big(\csc t-\cot t\Big) \,dt\\
=&\,\frac{2}{\pi} \int_0^{\pi/2} (\csc t-\sin t)\, dt \, \sec t\,dt\, \Big(\csc t-\cot t\Big) \,dt\\
=&\,-X_0+\frac{2}{\pi} \int_0^{\pi/2} \csc t\, dt \, \sec t\,dt\, \Big(\csc t-\cot t\Big) \,dt\\
=&\,-X_0-(-1)^s \frac{2}{\pi} \int_0^1 \Big(2\tx_{-1}-\tx_{i}-\tx_{-i}\Big)\ta \td_{-1,1}, 
\end{align*}
by the change of variables $t\to \sin^{-1}(1-t^2)/(1+t^2)$ at the last step for both $X_0$ and $X_1$. Similarly,
\begin{align*}
Y_0:=&\, \frac{2}{\pi} \int_0^{\pi/2} \sin t\, dt \,dt\,d(\sec t) \Big(\csc t-\cot t\Big) \,dt\\
=&\, \frac{2}{\pi} \int_0^{\pi/2} \cos t\, dt\,d(\sec t) \Big(\csc t-\cot t\Big) \,dt\\
=&\,\frac{2}{\pi} \int_0^{\pi/2} dt \Big(\csc t-\cot t\Big)\,dt
-\frac{2}{\pi} \int_0^{\pi/2} \Big(1-\sin t\Big)\Big(\csc t\sec t -\csc t\Big)\,dt \\
=&\,\frac{2}{\pi} \int_0^1 i \Big(2\tx_{-1}-\tx_{i}-\tx_{-i}\Big) \td_{-i,i}+i \td_{-i,i}-2\tx_{-1}, \\
Y_1:=&\, \frac{2}{\pi} \int_0^{\pi/2} \sin t\, dt \cot t\,dt \,dt\,d(\sec t) \Big(\csc t-\cot t\Big) \,dt\\
=&\, \frac{2}{\pi} \int_0^{\pi/2} (\cos t\cot t)\, dt \,dt\,d(\sec t) \Big(\csc t-\cot t\Big) \,dt\\
=&\, -Y_0+\frac{2}{\pi} \int_0^{\pi/2} \csc t\, dt \,dt\,d(\sec t) \Big(\csc t-\cot t\Big) \,dt\\
=&\, -Y_0+\frac{2}{\pi} \int_0^{\pi/2} \csc t\, dt \Big[\sec t\, dt\, \Big(\csc t-\cot t\Big)\, dt -dt\, \Big(\csc t\sec t-\csc t\Big) \,dt\Big]\\
 = &\, -Y_0+\frac{2}{\pi} \int_0^1 \Big(2\tx_{-1}-\tx_{i}-\tx_{-i}\Big) \ta \td_{-1,1}
 -\frac{2}{\pi} \int_0^{\pi/2} i \Big(\ta+2\tx_{-1}\Big)\td_{-i,i} \td_{-1,1} .
\end{align*}
Thus
\begin{align*}
I_1=X_1-Y_1-I_0=&\,
Y_0-X_0-I_0+\frac{2}{\pi}\int_0^1 \Big(2\tx_{-1}-\tx_{i}-\tx_{-i}\Big)\ta \td_{-1,1}\\
&\,-\frac{2}{\pi} \int_0^1 \Big(2\tx_{-1}-\tx_{i}-\tx_{-i}\Big) \ta \td_{-1,1}
+\frac{2}{\pi} \int_0^{\pi/2} i \Big(\ta+2\tx_{-1}\Big)\td_{-i,i} \td_{-1,1} \\
=&\,
-I_0+\frac{2}{\pi} \int_0^1 i \Big(2\tx_{-1}-\tx_{i}-\tx_{-i}\Big) \td_{-i,i}+i \td_{-i,i}-2\tx_{-1} \\
&\, -\frac{2}{\pi} \int_0^1 i \Big(2\tx_{-1}-\tx_{i}-\tx_{-i}\Big) \td_{-i,i}
+\frac{2}{\pi} \int_0^{\pi/2} i \Big(\ta+2\tx_{-1}\Big)\td_{-i,i} \td_{-1,1}.
\end{align*}
 Note that $I_0$ is given by \cite[Example B.8]{XuZhao2022}. Hence we obtain the following formula by using Au's Mathematica package of colored multiple 
 zeta values \cite{Au2020}: 
\begin{align*}
 &\,\sum_{k>m>0} \left( \frac{1}{16} \right)^{k} \binom{2 k}{k}^2 \frac{1}{(2k-1)^3(2m)} \\
= &\,\frac{2}{\pi} \bigg(\frac5{32}\pi^3 - 4 \mathcal{G} -2G(1+2\ln2)+6\ln2+\frac{\pi}8\Big(8\ln2+\ln^22-12\Big) \bigg).
\end{align*}
Now, using partial fraction decomposition, we see that
\begin{align*}
&\,\sum_{k>0} \left( \frac{1}{16} \right)^{k} \binom{2 k}{k}^2 \frac{1}{(2k-1)^3 (2k)}\\
=&\,\sum_{k>0} \left( \frac{1}{16} \right)^{k} \binom{2 k}{k}^2 \left(\frac{1}{(2k-1)^3}-\frac{1}{(2k-1)^2}+\frac{1}{2k-1}-\frac{1}{2k}\right)\\
=&\,\frac{2}{\pi}\left(\frac{\pi}{2}+2G-3\right)
-\frac2{\pi}\Big(2-\frac{\pi}2\Big)
+\frac{2}{\pi} \Big(\frac{\pi}{2}-1\Big)
-\frac{2}{\pi} (\pi\ln2-2G) \\
=& \frac2{\pi}\Big( \frac{3\pi}2+4G-\pi\ln2-6 \Big) 
\end{align*}
by Examples B.1 and B.3 of \cite{XuZhao2022}. Therefore,
\begin{align*}
&\,\sum_{k>0} \left( \frac{1}{16} \right)^{k} \binom{2 k}{k}^2 \frac{H_k}{(2k-1)^3}\\
=&\,2\sum_{k>0} \left( \frac{1}{16} \right)^{k} \binom{2 k}{k}^2 \frac{1}{(2k-1)^3 (2k)}
+2\sum_{k>m>0} \left( \frac{1}{16} \right)^{k} \binom{2 k}{k}^2 \frac{1}{(2k-1)^3(2m)} \\
=&\, \frac{4}{\pi} \bigg(\frac5{32}\pi^3 - 4
 \mathcal{G} +2G(1-2\ln2)+6\ln2+\frac{\pi}8 \ln^22 -6 \bigg).
\end{align*}
\noindent So, we obtain that:
\begin{equation}\label{afterJay}
 \sum _{k=1}^{\infty} \left( \frac{1}{16} \right)^{k} \binom{2 k}{k}^2 \frac{H_k}{2k+1}
 = -\frac{16 G \ln (2)}{\pi }-\frac{16 \mathcal{G} }{\pi }+\frac{5 \pi ^2}{8}+\frac{\ln
 ^2(2)}{2}.
\end{equation}
 Thus, according to \eqref{Paulbrilliant}, we find that:
\begin{equation}\label{odd2kp1}
 \sum _{k=1}^{\infty} \left( \frac{1}{16} \right)^{k} \binom{2 k}{k}^2 \frac{O_{k} }{2 k+1}
 = -\frac{4 G \ln (2)}{\pi }-\frac{16 \mathcal{G} }{\pi }+\frac{3 \pi ^2}{8}+\frac{\ln
 ^2(2)}{2}.
\end{equation}
 So, according to our relation for \eqref{nocentral}, as derived from \cite{Campbell2022}, we obtain the desired result. 
\end{proof}

\section{The general case}\label{sectionnotConclusion}
 We will now derive an explicit evaluation for \eqref{mainproblem}, for an arbitrary parameter $\lambda \in \mathbb{N}$. We also take care of the open 
 problem from \cite{WangChu2022} of evaluating 
\begin{align}
 \sum_{n=1}^{\infty} \left( \frac{1}{16} \right)^{n} \binom{2n}{n}^2
 \frac{O_{n} }{(1 + 2n - 2 \lambda)^2}. \label{firstproblem} 
 \end{align}
 Chu \cite{Chu2023} had recently employed a hypergeometric approach to evaluate 
\begin{align}
 \sum_{n=1}^{\infty} \left( \frac{1}{16} \right)^{n} \binom{2n}{n}^2 
 \frac{O_{n}^2 }{1 + 2n - 2 \lambda}, \label{secondproblem}
 \end{align}
 and we will also need an evaluation for \eqref{secondproblem}, in our solution to the main problem of evaluating \eqref{mainproblem}. We will also 
 need an evaluation for 
\begin{equation}\label{easycase}
 \sum_{n=1}^{\infty} \left( \frac{1}{16} \right)^{n} \binom{2n}{n}^2
 \frac{O_{n} }{1 + 2n - 2 \lambda}, 
 \end{equation}
 noting that series of the form indicated in \eqref{easycase} were also evaluated by 
 Wang and Chu in \cite{WangChu2022}, again via coefficient extractions. 

 Defining 
\[ A(m) := \begin{cases} 
 \displaystyle -\frac{16^{m}}{ 2\pi m^2 \binom{2m}{m}^2 } \sum_{k=0}^{m - 
 	1} \left( \frac{1}{16} \right)^{k} \binom{2k}{k}^2 & \text{if $m \in \mathbb{N}$} \\ 
 \displaystyle \frac{4 G}{\pi } & \text{if $m = 0$} 
 \end{cases}
\]
 and 
 \[ B(m) := \begin{cases} 
 \displaystyle  \frac{16^{m}}{m^2 \binom{2m}{m}^2 } \sum_{k = 
 0}^{m - 1} \left( \frac{1}{16} \right)^{k} \binom{2k}{k}^2 \frac{k A(k) +\tfrac{1}{\pi}}{2k+1} 
 & \text{if $m \in \mathbb{N}$} \\ 
 \displaystyle -\frac{16 \mathcal{G}}{\pi} + \frac{3\pi^2}{8} + \frac{\ln^2(2)}{2} & \text{if $m = 0$,}
 \end{cases}
\]
 this leads us toward the following Lemma. The $m = 0$ case for \eqref{Lemma2} has been 
 established in \cite{CampbellLevrieNimbran2021,CantariniDAurizio2019}. 

\begin{lemma}\label{Lemma}
For $m \in \mathbb{N}_{0}$, we have that
\begin{align}
 \sum_{n = 0}^{\infty} \left( \frac{1}{16} \right)^{n} \binom{2n}{n}^2 \frac{1}{2n+1-2m} & = 
 A(m), \label{Lemma1}\\ 
 \sum_{n=0}^{\infty} \left( \frac{1}{16} \right)^{n} \binom{2n}{n}^2 \frac{1}{(2n+1-2m)^2} & = 
 B(m). \label{Lemma2}
\end{align}
\end{lemma}

     The proof of these two results is similar to that of \eqref{SfunctionRamanujan}. See also \cite{Nimbran2015}. With regard to our applying the  
 $A$- and $B$-sequences above toward our fully solving open problems from \cite{WangChu2022}, our results as in Theorem \ref{fullsolution} are 
 also of interest due to how the \emph{Landau constants} $\sum_{i=0}^{n} \left( \frac{1}{16} \right)^{i} \binom{2i}{i}^2$ are heavily involved in our 
 work; see \cite{CampbellChen2022} and references therein for related uses of the Landau constants. We see that the $A$-expression defined in 
 \eqref{Lemma1} is, up to a scalar multiple by a closed-form expression, equal to a Landau constant for all $m$, and similarly with the Ramanujan 
 summation in \eqref{Sfunction}. The history of Ramanujan's $S$-function \cite{Adamchik2002}, the importance of the Landau constants in complex 
 analysis, and the ongoing interest in computational problems concerning approximating the Landau constants all contribute to the interest in 
 how our explicit, finite sum evaluations as in Theorem \ref{fullsolution} may be formulated in a natural way in terms of the above indicated $A$-sequence. 

\begin{theorem} \label{theorem2}
For all $\lambda \in \mathbb{N}$ we have
\begin{equation}
\sum_{n=1}^{\infty} \left( \frac{1}{16} \right)^{n} \binom{2n}{n}^2
\frac{O_{n} }{1 + 2n - 2 \lambda} = -\ln(2) A(\lambda)+ \frac{16^{\lambda}}{ 2\lambda^2 \binom{2\lambda}{\lambda}^2 } \sum_{k=0}^{\lambda - 1} \left( \frac{1}{16} \right)^{k} \binom{2k}{k}^2 k A(k). \label{th1}
\end{equation}
\end{theorem}
	
\begin{proof}
 To determine a recursion for \eqref{easycase}, we start from
 \begin{align}\label{recur6}
 \sum_{n=1}^{\infty} & \left( \frac{1}{16} \right)^{n} \binom{2n}{n}^2
 \frac{O_{n} }{1 + 2n - 2 \lambda } =
 \sum_{n=0}^{\infty} \left( \frac{1}{16} \right)^{n} \binom{2n}{n}^2
 \frac{O_{n+1}-\frac{1}{2n+1}}{1 + 2n - 2 \lambda} \\ & =
 \sum_{n=0}^{\infty} \left( \frac{1}{16} \right)^{n} \binom{2n}{n}^2
 \frac{O_{n+1}}{1 + 2n - 2 \lambda}\nonumber -\sum_{n=0}^{\infty} \left( \frac{1}{16} \right)^{n} \binom{2n}{n}^2 \frac{1}{(2n+1)(1 + 2n - 2 \lambda)} .
 \end{align}
 For the second term at the right-hand side  we use \eqref{Lemma1} and the well known series
 \[
 \sum_{n=0}^{\infty} \left( \frac{1}{16} \right)^{n} \binom{2n}{n}^2
 \frac{1}{2n+1} = \frac{4G}{\pi} .
 \]
 Using partial fractions, we then obtain:
 \[
 \sum_{n=0}^{\infty} \left( \frac{1}{16} \right)^{n} \binom{2n}{n}^2
 \frac{1}{(2n+1)(1 + 2n - 2 \lambda)} = \frac{1}{2\lambda} A(\lambda) - \frac{1}{2\lambda} \frac{4G}{\pi} .
 \]
For the first term at the right-hand side  we use reindexing:
 $$ \sum_{n=0}^{\infty} \left( \frac{1}{16} \right)^{n} \binom{2n}{n}^2
 \frac{O_{n+1}}{1 + 2n - 2 \lambda} = 4\sum_{n=0}^{\infty} \left( \frac{1}{16} \right)^{n} \binom{2n}{n}^2 O_{n}
 \frac{n^2}{(2n-1)^2(1 + 2n - 2 \lambda-2)} . $$
The last factor has the following partial fraction expansion:
 $$\frac{n^2}{(2n-1)^2 (1 + 2n - 2 \lambda-2)} = \frac{1}{16\lambda^2} \frac{(1+2\lambda)^2}{2n+1-2\lambda-2} - \frac{1}{16 \lambda^2}\frac{1+4\lambda}{2n-1} -\frac{1}{8\lambda}\frac{1}{(2n-1)^2} $$
 leading to the following two known series (see \cite{WangChu2022}) 
\begin{align*}
& - \frac{1+4\lambda}{16 \lambda^2} \sum_{n=0}^{\infty} \left( \frac{1}{16} \right)^{n} \binom{2n}{n}^2 \frac{O_n}{2n-1} = - \frac{1+4\lambda}{16 \lambda^2}\frac{2\ln(2)}{\pi}, \hspace{1cm} \\
& -\frac{1}{8\lambda}\sum_{n=0}^{\infty} \left( \frac{1}{16} \right)^{n} \binom{2n}{n}^2 \frac{O_n}{(2n-1)^2} = -\frac{1}{2\lambda}\frac{G 
 - \ln(2)}{\pi}, \hspace{1cm} 
\end{align*}
 and the series $$ \frac{(1+2\lambda)^2}{16\lambda^2} \sum_{n=1}^{\infty} \left( \frac{1}{16} \right)^{n} \binom{2n}{n}^2 \frac{O_{n} }{1 + 2 n 
 - 2 \lambda-2 } . $$ Setting $$ C(m):=\sum_{n=1}^{\infty} \left( \frac{1}{16} \right)^{n} \binom{2n}{n}^2 \frac{O_{n} }{1 + 2n - 2 m } $$ 
 \eqref{recur6} can be rewritten as: $$ C(\lambda+1)=\frac{4\lambda^2}{(1+2\lambda)^2} C(\lambda) + \frac{2}{(1+2\lambda)^2} 
 \left(\lambda A(\lambda) + \frac{\ln(2)}{\pi} \right) \ \ \mbox{with} \ \ C(1)=\frac{2\ln(2)}{\pi}. $$ To solve this recurrence, note that $$z(\lambda) := 
 2\left( \frac{1}{16} \right)^{\lambda} \binom{2\lambda}{\lambda}^2 \lambda^2 C(\lambda)$$ is a solution of the recurrence $$ z(\lambda+1) = 
 z(\lambda) + \left( \frac{1}{16} \right)^{\lambda} \binom{2\lambda}{\lambda}^2 \left(\lambda A(\lambda) + \frac{\ln(2)}{\pi} \right) 
 \ \ \ \text{with} \ \ \ z(0)=0. $$ An explicit evaluation for this solution is given by $$z(\lambda) = \sum_{k = 0}^{\lambda - 
 1} \left( \frac{1}{16} \right)^{k} \binom{2k}{k}^2 \left( k A(k) + \frac{\ln(2)}{\pi} \right), $$ leading to: 
\begin{equation}\label{C(m)}
 C(\lambda) =\frac{16^{\lambda}}{ 2\lambda^2 \binom{2\lambda}{\lambda}^2 } \sum_{k = 
 0}^{\lambda - 1} \left( \frac{1}{16} \right)^{k} \binom{2 k}{k}^2 
 \left( k A( k ) + \frac{\ln(2)}{\pi} \right) 
\end{equation}
which is equivalent with \eqref{th1}.
\end{proof}

\begin{theorem}
 For all $\lambda \in \mathbb{N}$ we have 
\begin{equation}
	\sum_{n=1}^{\infty} \left( \frac{1}{16} \right)^{n} \binom{2n}{n}^2
	\frac{O_{n} }{(1 + 2n - 2 \lambda)^2} = \frac{16^{\lambda}}{\lambda^2 \binom{2\lambda}{\lambda}^2 } \left( \frac{G-\ln(2)}{\pi}+\sum_{k=1}^{\lambda - 1} \left( \frac{1}{16} \right)^{k} \binom{2k}{k}^2 D(k) \right) \label{th2}
\end{equation}
where $C(m)$ is given by \eqref{C(m)} and
	$$D(m):= \frac{1}{2m+1} \left( m C(m) - \frac{2m-1}{4} A(m) - \frac{\ln (2)}{\pi} \right) + \frac{m}{2} B(m).$$ 
\end{theorem}

\begin{proof} In this case we start from:
	 \begin{align}\label{recur4}
	\sum_{n=1}^{\infty} & \left( \frac{1}{16} \right)^{n} \binom{2n}{n}^2
	\frac{O_{n} }{(1 + 2n - 2 \lambda)^2 } =
	\sum_{n=0}^{\infty} \left( \frac{1}{16} \right)^{n} \binom{2n}{n}^2
	\frac{O_{n+1}-\frac{1}{2n+1}}{(1 + 2n - 2 \lambda)^2} \\ & =
	\sum_{n=0}^{\infty} \left( \frac{1}{16} \right)^{n} \binom{2n}{n}^2
	\frac{O_{n+1}}{(1 + 2n - 2 \lambda)^2}\nonumber -\sum_{n=0}^{\infty} \left( \frac{1}{16} \right)^{n} \binom{2n}{n}^2 \frac{1}{(2n+1)(1 + 2n - 2 \lambda)^2} .
	\end{align}
The rest of the proof is very similar to the proof of the previous theorem, but in this case we need the following two results:
\[
\sum_{n=0}^{\infty} \left( \frac{1}{16} \right)^{n} \binom{2n}{n}^2 \frac{1}{(1 + 2n - 2 \lambda)^2} = B(\lambda)
\]
and
\[
\sum_{n=0}^{\infty} \left( \frac{1}{16} \right)^{n} \binom{2n}{n}^2 \frac{O_n}{1 + 2n - 2 \lambda} = C(\lambda).
\]
Let us define
\[
E(m):=\sum_{n=1}^{\infty} \left( \frac{1}{16} \right)^{n} \binom{2n}{n}^2
\frac{O_{n} }{(1 + 2n - 2 m)^2}
\]
then the corresponding recurrence relation can be written as:
\[
(\lambda+1)^2 \left( \frac{1}{16} \right)^{\lambda+1} \binom{2\lambda+2}{\lambda+1}^2 E(\lambda+1)
= \lambda^2 \left( \frac{1}{16} \right)^{\lambda} \binom{2\lambda}{\lambda}^2 E(\lambda) + \left( \frac{1}{16} \right)^{\lambda} \binom{2\lambda}{\lambda}^2 D(\lambda)
\]
with $D(m)$ as in the statement of the theorem and with $E(1) = \frac{4G - 4\ln(2)}{\pi}$.
\end{proof}

\begin{theorem} 
For all $\lambda \in \mathbb{N}$ we have
\begin{equation}
	\sum_{n=1}^{\infty} \left( \frac{1}{16} \right)^{n} \binom{2n}{n}^2
	\frac{O_{n}^2 }{1 + 2n - 2 \lambda} = \frac{16^{\lambda}}{\lambda^2 \binom{2\lambda}{\lambda}^2 } \left( \frac{48G-\pi^2 -24\ln^2(2)}{48\pi}+\sum_{k=1}^{\lambda - 1} \left( \frac{1}{16} \right)^{k} \binom{2k}{k}^2 F(k) \right) \label{th4}
\end{equation}
where $C(m)$ is given by \eqref{C(m)} and
	$$F(m):= m C(m) + \frac{1}{4} A(m) - \frac{\pi^2+24\ln^2 (2)}{48\pi} .$$ 
\end{theorem}

\begin{proof}
 We start with $$\sum_{n=1}^{\infty} \left( \frac{1}{16} \right)^{n} \binom{2n}{n}^2 \frac{O_{n}^2 }{1 + 2n - 2 \lambda } = \sum_{n = 
 0}^{\infty} \left( \frac{1}{16} \right)^{n} \binom{2n}{n}^2 \frac{\left(O_{n+1}-\frac{1}{2n+1}\right)^2}{1 + 2n - 2 \lambda} .$$ 
 We proceed in the same way as before. After expanding using partial fractions, we need the sums of some additional series: 
 \[ \sum_{n=0}^{\infty} \left( \frac{1}{16} \right)^{n} \binom{2n}{n}^2 
 \frac{1}{(2n+1)^2} = - \frac{16 \mathcal{G} }{\pi } + \frac{3 \pi ^2}{8} +\frac{\ln ^2(2)}{2} \quad \text{(see \eqref{eq:5})}, 
 \] 
 \[ 
 \sum_{n=0}^{\infty} \left( \frac{1}{16} \right)^{n} \binom{2n}{n}^2 
 \frac{O_n^2}{(2n-1)^2} = 
 -\frac{8 G}{\pi }-\frac{8 G \ln (2)}{\pi }-\frac{48 \mathcal{G}}{\pi }+\frac{9 \pi^2}{8}+\frac{\pi }{6}+\frac{4 \ln ^2(2)}{\pi }+\frac{3 \ln ^2(2)}{2} 
 \]
(see Theorem \ref{maintheorem}), and
\[
\sum_{n=0}^{\infty} \left( \frac{1}{16} \right)^{n} \binom{2n}{n}^2
\frac{O_n}{(2n-1)^3} = -\frac{4 G (\ln (2)+2)}{\pi }-\frac{32 \mathcal{G} }{\pi }+ \frac{6 \ln (2)}{\pi}+\frac{3 \pi ^2}{4}+\ln^2(2) .
\]
The sum of this last series can be found by reindexing:
\begin{align*}
\sum_{n=1}^{\infty} & \left( \frac{1}{16} \right)^{n} \binom{2n}{n}^2
\frac{O_n}{(2n-1)^3} =
\sum_{n=1}^{\infty} \left( \frac{1}{16} \right)^{n} \frac{4(2n-1)^2}{n^2}\binom{2n-2}{n-1}^2
\frac{O_{n-1}+\tfrac{1}{2n-1}}{(2n-1)^3} \\
& =
\frac{1}{4}\sum_{n=1}^{\infty} \left( \frac{1}{16} \right)^{n} \binom{2n}{n}^2
\frac{O_n}{(n+1)^2 (2n+1)} + \frac{1}{4}\sum_{n=1}^{\infty} \left( \frac{1}{16} \right)^{n} \binom{2n}{n}^2
\frac{1}{(n+1)^2 (2n+1)^2}.
\end{align*}
 In addition to the series used before, here we also need these: 
\begin{align*}
 & \sum_{n=1}^{\infty} \left( \frac{1}{16} \right)^{n} \binom{2n}{n}^2 
 \frac{1}{(n+1)^i} \ (i=1,2) \quad \text{(see \cite{Nimbran2015})}, \\ 
 & \sum_{n=1}^{\infty} \left( \frac{1}{16} \right)^{n} \binom{2n}{n}^2 
 \frac{O_n}{(n+1)^i} \ (i=1,2) \quad \text{(see \cite{WangChu2022})}, 
\end{align*}
 and $$ \sum_{n=1}^{\infty} \left( \frac{1}{16} \right)^{n} \binom{2n}{n}^2 
 \frac{O_n}{2n+1}, $$ 
 which is given in \eqref{odd2kp1}. Defining 
 \[ G(m):=\sum_{n=1}^{\infty} \left( \frac{1}{16} \right)^{n} \binom{2n}{n}^2 
 \frac{O_{n}^2 }{1 + 2n - 2 m } \] 
 we find the same type of recurrence for $G$.    
\end{proof}

 The following is a main result in our work, as it provides a full solution to an especially difficult open problem from \cite{WangChu2022}, recalling the 
 nontrivial nature of the base case covered in Section \ref{sectionbase}. 

\begin{theorem}\label{fullsolution}
 For $\lambda \in \mathbb{N}$,
\begin{equation}
	\sum_{n=1}^{\infty} \left( \frac{1}{16} \right)^{n} \binom{2n}{n}^2
	\frac{O_{n}^2 }{(1 + 2n - 2 \lambda)^2} = \frac{16^{\lambda}}{\lambda^2 \binom{2\lambda}{\lambda}^2 } \left( K + \sum_{k=1}^{\lambda - 1} \left( \frac{1}{16} \right)^{k} \binom{2k}{k}^2 H(k) \right) \label{th5}
	\end{equation}
 with
	$$ K:=\frac{1}{4} \left( -\frac{8 G}{\pi }-\frac{8 G \ln (2)}{\pi }-\frac{48 \mathcal{G}}{\pi } + 
 \frac{9 \pi^2}{8}+\frac{\pi }{6}+\frac{4 \ln ^2(2)}{\pi }+\frac{3 \ln ^2(2)}{2} \right)$$
	and
	$$H(m):= \frac{ 2m (G(m)-C(m)) + C(m) - A(m)}{2(2m+1)}+ \frac{\pi^2+24\ln^2 (2)}{24(2m+1)\pi} + \frac{1}{4} B(m) + m E(m).$$
\end{theorem}

\begin{proof} The method is the same as in the proof of the previous theorems. We start with: $$\sum_{n=1}^{\infty} \left( \frac{1}{16} \right)^{n} 
 \binom{2n}{n}^2 \frac{O_{n}^2 }{(1 + 2n - 2 \lambda)^2 } = 
 \sum_{n = 0}^{\infty} \left( \frac{1}{16} \right)^{n} \binom{2n}{n}^2 
 \frac{\left(O_{n+1}-\frac{1}{2n+1}\right)^2}{(1 + 2n - 2 \lambda)^2 } .$$ 
 Expanding the square in the numerator and reindexing the series containing $O_n$ leads to a sum of three series: 
 \begin{align*}
 4\sum_{n=0}^{\infty} & \left( \frac{1}{16} \right)^{n} \binom{2n}{n}^2 O_{n}^2
 \frac{n^2}{(2n-1)^2(1 + 2n - 2 \lambda-2)^2}, \\ 
 -8\sum_{n=0}^{\infty} & \left( \frac{1}{16} \right)^{n} \binom{2n}{n}^2 O_{n} 
 \frac{n^2}{(2n-1)^3(1 + 2n - 2 \lambda-2)^2}, \\ 
 \sum_{n=0}^{\infty} &\left( \frac{1}{16} \right)^{n} \binom{2n}{n}^2 
 \frac{1}{(2n+1)^2(1 + 2n - 2 \lambda-2)^2}. 
 \end{align*}
 We now use partial fractions as before. One of the series we obtain is the series \eqref{th5} with $\lambda$ replaced by $\lambda+1$. All the other ones we have already encountered in the previous theorems.
 Setting
 \[
 J(m):=\sum_{n=1}^{\infty} \left( \frac{1}{16} \right)^{n} \binom{2n}{n}^2
 \frac{O_{n}^2 }{(1 + 2n - 2 m)^2 },
 \]
 we again find that $J$ satisfies a first order recurrence:
 \[
 (\lambda+1)^2 \left( \frac{1}{16} \right)^{\lambda+1} \binom{2\lambda+2}{\lambda+1}^2 J(\lambda+1)
 = \lambda^2 \left( \frac{1}{16} \right)^{\lambda} \binom{2\lambda}{\lambda}^2 J(\lambda) + \left( \frac{1}{16} \right)^{\lambda} \binom{2\lambda}{\lambda}^2 H(\lambda)
 \]
 with $J(1)=4K$ by Theorem \ref{maintheorem}. Solving this recurrence in the same way as in Theorem \ref{theorem2} leads to the desired result.
\end{proof}

\begin{example}
 In addition to the base case covered in Section \ref{sectionbase}, we obtain,
 for the $\lambda = 2$, $\lambda = 3$, and $\lambda = 4$ cases,
 the following:
 \begin{align*}
 & \sum_{k=1}^{\infty} \left( \frac{1}{16} \right)^{k} \binom{2k}{k}^2 \frac{ O_{k}^{2} }{ (2k-3)^2 }
 = -\frac{32 G}{27 \pi} - \frac{32 G \ln 2}{9 \pi} \\
 & \hspace{5cm} - \frac{64 \mathcal{G}}{3 \pi}
 + \frac{\pi^2}{2} + \frac{11 \pi}{162} + \frac{16}{27 \pi} + \frac{44 \ln^2 2}{27 \pi}
 + \frac{2 \ln^2 2}{3} - \frac{52 \ln 2}{27 \pi}, \\
 & \sum_{n=1}^{\infty} \left( \frac{1}{16} \right)^{n} \binom{2n}{n}^2 \frac{O_{n}^2}{(2n-5)^2} =
 \frac{32 \ln^2(2)}{75}+\frac{8 \pi^{2}}{25}+\frac{3388 \ln^2(2)}{3375 \pi}-\frac{512 G \ln \! \left(2\right)}{225 \pi} \\
 & \hspace{5cm} +\frac{847 \pi}{20250}-\frac{5908 \ln \! \left(2\right)}{3375 \pi}-\frac{256 G}{3375 \pi}-\frac{1024 \mathcal{G}}{75 \pi}+\frac{2624}{3375 \pi}, \\
 & \sum_{n=1}^{\infty} \left( \frac{1}{16} \right)^{n} \binom{2n}{n}^2 \frac{O_{n}^2}{(2n-7)^2} =
 \frac{384 \ln^2(2)}{1225}+\frac{288 \pi^{2}}{1225}+\frac{92804 \ln^2(2)}{128625 \pi}-\frac{2048 G \ln \! \left(2\right)}{1225 \pi}\\
 & \hspace{3cm} +\frac{23201 \pi}{771750}-\frac{65284 \ln \! \left(2\right)}{42875 \pi}+\frac{11264 G}{42875 \pi}-\frac{12288 \mathcal{G}
 }{1225 \pi}+\frac{188144}{231525 \pi}.
 \end{align*}
\end{example}

\section{Further evaluations and concluding remarks}\label{sectionFurther}
 Our methods have involved the evaluation of series involving summand factors of the form $$ \left( \frac{1}{16} \right)^{k} \binom{2 k}{k}^{2} 
 \frac{1}{2k+1} $$ for $k \in \mathbb{N}_{0}$, so it seems worthwhile to consider further evaluations for such sums and 
 related summations. In this regard, setting $\beta(4)=\sum_{n=0}^\infty \frac{(-1)^n}{(2n+1)^4}$, 
 let us reproduce the following
 identities from \cite{CampbellLevrieNimbran2021,CantariniDAurizio2019}:
\begin{align}
& \sum _{k=0}^{\infty } \left( \frac{1}{16} \right)^{k} \binom{2k}{k}^2
\frac{1}{2 k+1} = \frac{4G}{\pi}, \label{eq:4}\\
& \sum _{k=0}^{\infty } \left( \frac{1}{16} \right)^{k} \binom{2k}{k}^2
\frac{1}{(2 k+1)^2} = - \frac{16 \mathcal{G} }{\pi } + \frac{3 \pi ^2}{8} +\frac{\ln ^2(2)}{2}, \label{eq:5}\\
& \sum _{k=0}^{\infty } \left( \frac{1}{16} \right)^{k} \binom{2k}{k}^2
\frac{1}{(2 k+1)^3} =
-64\frac{\Im \left(\text{Li}_{4} (1-i)\right)}{\pi} - \pi^2 \ln(2)-\frac{2}{3} \ln^3 (2) - 48 \frac{\beta(4)}{\pi}. \label{Li4}
\end{align}
   Such evaluations are closely related to the evaluation    of series of the form $\sum_{k=1}^\infty \frac{\binom{2k}{k}^2}{k^n \, 16^k}$.    For example,   
   consider the following analogue of \eqref{eq:5}.  

\begin{theorem}\label{thm:alpha2}
 The evaluation $$ {\sum_{k=1}^\infty \frac{\binom{2k}{k}^2}{k^2 \, 16^k} = \frac{3 \pi^2}{2} - \frac{ 64 \mathcal{G}}{\pi} -
 6 \ln^2 (2)} $$ holds true.
\end{theorem}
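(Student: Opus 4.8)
The plan is to strip off one of the two central binomials with a Wallis--beta integral and then to recognise what remains as a weight-three quantity accessible to the iterated-integral machinery of Section~\ref{subsectionproof}. Writing $\binom{2k}{k}/4^{k}=\tfrac{2}{\pi}\int_{0}^{\pi/2}\sin^{2k}\theta\,d\theta$ and summing under the integral sign,
$$\sum_{k=1}^{\infty}\frac{\binom{2k}{k}^{2}}{k^{2}\,16^{k}}=\frac{2}{\pi}\int_{0}^{\pi/2}\Bigg(\sum_{k=1}^{\infty}\frac{\binom{2k}{k}}{4^{k}k^{2}}\sin^{2k}\theta\Bigg)\,d\theta .$$
Starting from the classical $\sum_{k\ge 1}\binom{2k}{k}y^{k}/(4^{k}k)=-2\ln\tfrac{1+\sqrt{1-y}}{2}$, a further integration gives $\sum_{k\ge 1}\binom{2k}{k}y^{k}/(4^{k}k^{2})=-2\int_{0}^{y}t^{-1}\ln\tfrac{1+\sqrt{1-t}}{2}\,dt$; the choices $y=\sin^{2}\theta$ and then $t=\sin^{2}\phi$ turn the inner sum into $-8\int_{0}^{\theta}\ln\cos(\phi/2)\cot\phi\,d\phi$, and the substitutions $\phi=2\psi$ and $v=\sin\psi$ give it the closed form $2\,\text{Li}_{2}(\sin^{2}(\theta/2))-4\ln^{2}\cos(\theta/2)$. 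Integrating in $\theta$ and setting $\theta=2\psi$ yields the reduction
$$\sum_{k=1}^{\infty}\frac{\binom{2k}{k}^{2}}{k^{2}\,16^{k}}=\frac{8}{\pi}\int_{0}^{\pi/4}\text{Li}_{2}(\sin^{2}\psi)\,d\psi-\frac{16}{\pi}\int_{0}^{\pi/4}\ln^{2}\cos\psi\,d\psi ,$$
in which the second integral is elementary: splitting the classical value $\int_{0}^{\pi/2}\ln^{2}\cos\psi\,d\psi=\tfrac{\pi^{3}}{24}+\tfrac{\pi\ln^{2}2}{2}$ at $\psi=\pi/4$ and using \eqref{eq:2} gives $\int_{0}^{\pi/4}\ln^{2}\cos\psi\,d\psi=\mathcal{G}-\tfrac{G\ln 2}{2}+\tfrac{7\pi\ln^{2}2}{32}-\tfrac{7\pi^{3}}{384}$.

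The decisive step is to evaluate $P:=\int_{0}^{\pi/4}\text{Li}_{2}(\sin^{2}\psi)\,d\psi$. Since $\tfrac{d}{d\psi}\text{Li}_{2}(\sin^{2}\psi)=-4\cot\psi\ln\cos\psi$, integration by parts gives $P=\tfrac{\pi}{4}\text{Li}_{2}(\tfrac12)+4\int_{0}^{\pi/4}\psi\cot\psi\ln\cos\psi\,d\psi$, and the substitution $t=\sin\psi$ identifies the last integral with $\tfrac12\int_{0}^{1/\sqrt2}t^{-1}\arcsin t\,\ln(1-t^{2})\,dt$, a close relative of the integral in \eqref{logcos}. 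This is the genuine obstacle: it is a weight-three integral that cannot be reached by elementary manipulation of the evaluations \eqref{eq:4}--\eqref{20220518359PM1A}. I would treat it exactly as the series in Section~\ref{subsectionproof} were treated, by rewriting $P$ in the form $\tfrac{2}{\pi}\int_{0}^{\pi/2}$ of a product of the $1$-forms $\ta,\tx_{\pm 1},\tx_{\pm i}$, reducing it via \cite{XuZhao2022} to a $\mathbb{Q}$-linear combination of level-$4$ colored multiple zeta values, and evaluating it with Au's package \cite{Au2020}; the outcome is $\int_{0}^{\pi/4}\text{Li}_{2}(\sin^{2}\psi)\,d\psi=\tfrac{29\pi^{3}}{192}-6\mathcal{G}-G\ln 2-\tfrac{5\pi\ln^{2}2}{16}$.

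Substituting these two evaluations into the reduction, the $G\ln 2/\pi$ contributions cancel, the multiples of $\mathcal{G}/\pi$ add up to $-64\mathcal{G}/\pi$, and the rational multiples of $\pi^{2}$ and of $\ln^{2}2$ collapse to $\tfrac32\pi^{2}$ and $-6\ln^{2}2$, which is the asserted value. As an independent check one may instead reindex using $\binom{2n}{n}=2\binom{2n-1}{n-1}$ and $\binom{2n-1}{n-1}=\tfrac{2n-1}{n}\binom{2n-2}{n-1}$, writing the series as $A_{2}-A_{3}+\tfrac14A_{4}$ with $A_{j}=\sum_{m\ge 0}\binom{2m}{m}^{2}\big/\big(16^{m}(m+1)^{j}\big)$; here $A_{2}=\tfrac{16}{\pi}-4$ and $A_{3}=\tfrac{48}{\pi}-16+16\ln 2-\tfrac{32G}{\pi}$ are elementary consequences of $\sum_{j\ge 0}\binom{2j}{j}x^{2j}\big/\big(4^{j}(2j-1)^{2}\big)=\sqrt{1-x^{2}}+x\arcsin x$ together with Wallis integrals, while $A_{4}$ again reduces to the same integral $P$.
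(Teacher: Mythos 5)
Your proposal is correct and follows essentially the same route as the paper: your identity $\sum_{k\ge 1}\binom{2k}{k}\sin^{2k}\theta/(4^k k^2)=2\Li_2\bigl(\sin^2(\theta/2)\bigr)-4\ln^2\cos(\theta/2)$ under the Wallis integral is exactly the decomposition \eqref{break2int}, your value for $\int_0^{\pi/4}\ln^2\cos\psi\,d\psi$ is \eqref{eq:2.3}, and your integration by parts on $P=\int_0^{\pi/4}\Li_2(\sin^2\psi)\,d\psi$ is the paper's partial integration of the dilogarithm term. The only divergence is the final step: you defer $\int_0^{\pi/4}\psi\cot\psi\ln\cos\psi\,d\psi$ to the colored-MZV machinery of \cite{XuZhao2022,Au2020} and simply assert the resulting value of $P$ (which is indeed correct and yields the stated total), whereas the paper substitutes $u=\tan\psi$ and finishes with the two arctan--log integrals already evaluated in \cite{CampbellLevrieNimbran2021}; either finish is legitimate, though the paper's is self-contained where yours leaves the decisive weight-three evaluation to software.
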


\begin{proof}
 For our proof, we have to calculate the following: 
\begin{equation}\label{break2int}
 \sum_{k=1}^\infty \frac{\binom{2k}{k}^2}{k^2 \, 16^k} \cdot \frac{\pi}{2}= \int_0^{\pi/2}
2\, {\rm Li}_2 (\tfrac{1-\cos (t)}{2}) {dt}- 4\int_0^{\pi/2}\ln^2 (\cos(\tfrac{t}{2}) ) {dt}. 
\end{equation}
 The second integral in \eqref{break2int} is equivalent with
\begin{equation}
 8 \int_0^{\pi/4}\ln^2 (\cos(t)) {dt}= -\frac{7\pi^3}{48}+\frac{7}{4}\pi \ln^2(2)+8\mathcal{G}- 4 G \ln(2), \label{eq:2.3}
\end{equation}
which can be found in \cite{CampbellLevrieNimbran2021}. For the first one, we use partial integration:
\begin{align*}
\int_0^{\pi/2} 2\, {\rm Li}_2 (\tfrac{1-\cos (t)}{2}) {dt} & = \frac{1}{12} \pi^3
- \frac{1}{2}\pi \ln^2(2) + 2\int_0^{\pi/2} t \cot (\tfrac{t}{2}) \ln (\cos^2 (\tfrac{t}{2})) {dt} \\
& = \frac{1}{12} \pi^3
- \frac{1}{2}\pi \ln^2(2) + 8\int_0^{\pi/4} t \cot (t) \ln (\cos^2 (t)) {dt} .
\end{align*}
For this integral we use the substitution $u = \tan (t)$:
\begin{align*}
\int_0^{\pi/4} t \cot (t) \ln (\cos^2 (t)) {dt} & = -\int_0^1 \arctan (u) \ln (1+u^2) \frac{1}{u (1+u^2)} {du} \\
 & = -\int_0^1 \arctan (u) \ln (1+u^2) \left( \frac{1}{u}-\frac{u}{1+u^2} \right) {du}.
\end{align*}
 The first integral can be found in \cite{CampbellLevrieNimbran2021}. The second one requires one 
 partial integration in combination with \cite{CampbellLevrieNimbran2021}: 
\begin{align*}
& \int_{0}^{\pi/4} \ln^2 (\sin(t)) dt = - \mathcal{G} +\frac{1}{2} G \ln (2) +\frac{23}{384} \pi ^3 +\frac{9}{32} \pi \ln^2(2)
\end{align*}
 and \eqref{eq:2}. Bringing everything together, we obtain the desired result.
\end{proof}

\begin{remark}
Another method to evaluate the series in Theorem~\ref{thm:alpha2} is given in \cite[Example B.1]{XuZhao2022} using different basis elements.
 Moreover, Xu and Zhao proved in loc.\ cit.\ that
\begin{equation}\label{equ:re2}
 \sum_{k=1}^\infty \frac{\binom{2k}{k}^2}{k^3 \, 16^k}
=- \frac{512}{\pi} \Im \left(\text{Li}_4\Big(\frac{1+i}{2}\Big) \right)
- 6 \pi^2 \ln(2) +8 \ln^3 (2) + 384 \frac{\beta(4)}{\pi} + 4\zeta(3).
\end{equation}
\end{remark}

 Noting that $$\sum_{k=1}^\infty \frac{\binom{2k}{k}^2}{k^n \, 16^k} = \sum_{k=0}^\infty \frac{4(2k+1)^2\binom{2k}{k}^2}{(k+1)^{n+2} \, 
 16^{k+1}} = \sum_{k=0}^\infty \frac{4(2(k+1)-1)^2\binom{2k}{k}^2}{(k+1)^{n+2} \, 16^{k+1}}, $$ we obtain, using previous results, that 
\begin{align*}
 \sum_{k=0}^\infty \frac{\binom{2k}{k}^2}{(k+1)^3 \, 16^k} & = \frac{48}{\pi} - 16 - \frac{32G}{\pi} - 16\ln (2), \\
 \sum_{k = 0}^\infty
 \frac{\binom{2 k}{k}^2}{(k+1)^4 \, 16^k} & = \frac{128}{\pi}-48-\frac{128G}{\pi}+64\ln(2) + 6 \pi^2 - \frac{256\mathcal{G}}{\pi}-24
 \ln^2 (2), \\
 \sum_{k=0}^\infty \frac{\binom{2k}{k}^2}{(k+1)^5 \, 16^k} & =
 \frac{320}{\pi}-128-\frac{384G}{\pi}+192\ln(2) + 24 \pi^2 + \frac{1024\mathcal{G}}{\pi} -96 \ln^2 (2) \\
 & - \frac{2048}{\pi} \Im \left(\text{Li}_4\Big(\frac{1+i}{2}\Big) \right) -24 \pi^2 \ln(2) + 32 \ln^3 (2) + \frac{1536\beta(4)}{\pi} + 16\zeta(3).
\end{align*}

 Our applications, as above, of series as in \eqref{eq:4}--\eqref{Li4} motivate the study and application of identities as below. 

\begin{theorem}\label{thm-Varz}
For any integer $p\geq 0$, we have
\begin{align}\label{equ-int-log-sin-1}
\sum_{n=0}^\infty \binom{2n}{n}\frac{z^{2n+1}}{(2n+1)^{p+1}}=\frac{\theta}{2} \frac{\ln^p(2\sin\theta)}{p!}+\frac{1}{4p!} \sum_{j=1}^p (-1)^{j-1} \binom{p}{j} \ln^{p-j}(2\sin\theta)\Ls_{j+1}(2\theta),
\end{align}
 where $\theta:=\arcsin(2z)$ and 
\begin{align}
 \Ls_j(\theta) := -\int_0^\theta \ln^{j - 1}\left(2\sin\frac{t}{2}\right) \, dt. 
\end{align}
\end{theorem}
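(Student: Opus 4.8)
The plan is to prove the identity \eqref{equ-int-log-sin-1} by induction on $p$, using the generating-function identity for the central binomial coefficients as the anchor and differentiation/integration in the variable $\theta$ to climb from $p$ to $p+1$. First I would record the classical base case $p=0$: from $\sum_{n=0}^\infty \binom{2n}{n} w^n = (1-4w)^{-1/2}$ one gets, upon integrating, $\sum_{n=0}^\infty \binom{2n}{n} \frac{z^{2n+1}}{2n+1} = \frac{1}{2}\arcsin(2z) = \frac{\theta}{2}$, which is exactly the right-hand side of \eqref{equ-int-log-sin-1} when $p=0$ (the sum over $j$ is empty and $\ln^0 = 1$). This identifies the left-hand side, call it $F_p(\theta)$, as a function of $\theta$ with $F_0(\theta) = \theta/2$.

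Next I would set up the recursion connecting $F_{p+1}$ to $F_p$. Since $z = \frac{1}{2}\sin\theta$, we have $\frac{dz}{z} = \cot\theta\, d\theta$, and term-by-term $\frac{d}{d\theta}F_{p+1}(\theta) \cdot$ (something) relates to $F_p$; more precisely, dividing the series for $F_{p+1}$ is awkward, so instead I would use $F_{p+1}(\theta) = \int_0^\theta \cot t \, F_p'(t)\,?$ — better: observe that if $G_p(z) := \sum_n \binom{2n}{n} \frac{z^{2n+1}}{(2n+1)^{p+1}}$ then $z\frac{d}{dz}\big(z^{-1}G_{p+1}(z)\big)$ is not clean either. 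The clean statement is $\frac{d}{dz}G_{p+1}(z) = \frac{1}{z}G_p(z) - \frac{1}{z}\cdot 0$; indeed $\frac{d}{dz}\frac{z^{2n+1}}{(2n+1)^{p+1}} = \frac{z^{2n}}{(2n+1)^p} = \frac{1}{z}\cdot\frac{z^{2n+1}}{(2n+1)^p}$, hence $G_{p+1}'(z) = z^{-1}G_p(z)$, i.e. $G_{p+1}(z) = \int_0^z t^{-1}G_p(t)\,dt$. Changing variables to $\theta$ via $t = \frac12\sin\phi$, $dt/t = \cot\phi\,d\phi$, this becomes $F_{p+1}(\theta) = \int_0^\theta \cot\phi \, F_p(\phi)\,d\phi$. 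So the whole theorem reduces to verifying that the proposed closed form, call it $R_p(\theta) = \frac{\theta}{2}\frac{\ln^p(2\sin\theta)}{p!} + \frac{1}{4p!}\sum_{j=1}^p(-1)^{j-1}\binom{p}{j}\ln^{p-j}(2\sin\theta)\,\Ls_{j+1}(2\theta)$, satisfies the same recursion $R_{p+1}(\theta) = \int_0^\theta \cot\phi\,R_p(\phi)\,d\phi$ together with $R_0 = \theta/2$.

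To verify the recursion for $R_p$, I would instead check the differentiated form $R_{p+1}'(\theta) = \cot\theta\, R_p(\theta)$. This is where the bulk of the computation lies: one differentiates $R_{p+1}$, using $\frac{d}{d\theta}\ln^m(2\sin\theta) = m\cot\theta\,\ln^{m-1}(2\sin\theta)$ and the definition $\frac{d}{d\theta}\Ls_{j+1}(2\theta) = -2\ln^{j}(2\sin\theta)$ (from $\Ls_{j+1}(\psi) = -\int_0^\psi \ln^{j}(2\sin\frac{t}{2})dt$, so $\frac{d}{d\theta}\Ls_{j+1}(2\theta) = -2\ln^{j}(2\sin\theta)$), and then collects terms. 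The terms involving $\Ls$-functions must combine, via the Pascal-type identity $\binom{p+1}{j} = \binom{p}{j} + \binom{p}{j-1}$ and the shift in the power of $\ln(2\sin\theta)$, to reproduce $\cot\theta$ times the $\Ls$-part of $R_p$; the terms not involving $\Ls$ (coming from differentiating $\frac{\theta}{2}\ln^{p+1}(2\sin\theta)$ and from the $j$-sum term where the derivative hits $\Ls_{j+1}$, specifically the $j=p+1$-type boundary contribution) must combine to give $\cot\theta\cdot\frac{\theta}{2}\frac{\ln^p(2\sin\theta)}{p!}$ plus the leftover. The main obstacle is precisely this bookkeeping: tracking the index shifts and binomial coefficients so that the non-$\Ls$ leftover from differentiating the sum exactly cancels, leaving only the $\cot\theta\,R_p(\theta)$ we want; I expect one needs to peel off the $j=1$ term (where $\Ls_2(2\theta)$ appears with $\ln^{p}$) and the top term carefully, and use $\binom{p+1}{1}\ln^p\cdot(\text{stuff})$ matching against the $\frac{\theta}{2}$-derivative. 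Once the differential identity and the matching initial value $R_0=\theta/2=F_0$ are confirmed, uniqueness of solutions to $Y'(\theta) = \cot\theta\,R_p(\theta)$ with $Y(0)=0$ (both $F_{p+1}$ and $R_{p+1}$ vanish at $\theta=0$, as each term has a factor $\theta$ or $\Ls_{j+1}(0)=0$) finishes the induction.
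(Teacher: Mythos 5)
Your proposal is correct, but it takes a genuinely different route from the paper's. The paper derives the closed form directly: it represents the series as a single integral with a logarithmic kernel, $\frac{1}{2(p-1)!}\int_0^z \ln^{p-1}(z/t)\,\arcsin(2t)\,\frac{dt}{t}$, expands the kernel binomially in $\ln(4z)$ and $\ln(4t)$, integrates by parts against $d\ln^{j+1}(4t)$, substitutes $4t=2\sin x$ to produce the log-sine integrals $\Ls_{j+1}(2\theta)$, and finishes with $\sum_{j=0}^{p-1}\binom{p-1}{j}\frac{(-1)^j}{j+1}=\frac1p$. You instead climb one level at a time via $G_{p+1}(z)=\int_0^z G_p(t)\,\frac{dt}{t}$, i.e. $F_{p+1}(\theta)=\int_0^\theta \cot\phi\,F_p(\phi)\,d\phi$ after $t=\tfrac12\sin\phi$, and verify by induction that the proposed right-hand side $R_p$ obeys the same recursion with $R_0=\theta/2$. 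Your verification does go through, and the bookkeeping you flag as the main obstacle is in fact cleaner than you anticipate: differentiating $R_{p+1}$, the pure $\ln^{p+1}(2\sin\theta)$ contributions cancel because $\sum_{j=1}^{p+1}(-1)^{j-1}\binom{p+1}{j}=1$, while the $\Ls$-terms reassemble into $\cot\theta$ times the $\Ls$-part of $R_p$ via $\binom{p+1}{j}(p+1-j)=(p+1)\binom{p}{j}$; no Pascal splitting or peeling of boundary terms is needed, since the $j=p+1$ term simply drops out. You should state explicitly that near $\theta=0$ the integrand $\cot\phi\,R_p(\phi)$ is integrable and every term of $R_{p+1}$ vanishes (because $\theta\ln^m(2\sin\theta)\to0$ and $\Ls_{j+1}(2\theta)=O(\theta\,|\ln\theta|^{\,j})$), which is what legitimizes the uniqueness step. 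The trade-off between the two arguments: the paper's computation produces the formula without knowing it in advance, whereas your induction is a mechanical check of the guessed closed form that avoids the kernel expansion and partial integration entirely.
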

\begin{proof}
 We begin with the elementary Maclaurin series expansion 
\begin{align}\label{binom-arcsin-1}
 \sum_{n=0}^\infty \binom{2n}{n} \frac{z^{2n+1}}{2n+1}=\frac1{2}\arcsin(2z), 
\end{align}
 which is the $p = 0$ case. Now, from \eqref{binom-arcsin-1}, one deduces for $p > 0$ that 
\begin{align*}
&\sum_{n=0}^\infty \binom{2n}{n}\frac{z^{2n+1}}{(2n+1)^{p+1}}=\frac{1}{2(p-1)!} \int_0^z \frac{\ln^{p-1}\left(\frac{z}{t}\right)\arcsin(2t)}{t}dt\\
&=\frac{1}{2(p-1)!} \int_0^z \frac{\ln^{p-1}\left(\frac{4z}{4t}\right)\arcsin(2t)}{4t}d(4t)\\
&=\frac{1}{2(p-1)!} \sum_{j=0}^{p-1} \binom{p-1}{j}(-1)^j \ln^{p-1-j}(4z) \int_0^z \frac{\ln^j(4t)\arcsin(2t)}{4t}d(4t)\\
&=\frac{1}{2(p-1)!} \sum_{j=0}^{p-1} \binom{p-1}{j}\frac{(-1)^j}{j+1} \ln^{p-1-j}(4z) \int_0^z \arcsin(2t)d\ln^{j+1}(4t)\\
&=\frac{1}{2(p-1)!} \sum_{j=0}^{p-1} \binom{p-1}{j}\frac{(-1)^j}{j+1} \ln^{p-1-j}(4z) \int_0^\theta xd\ln^{j+1}(2\sin x)\\
&=\frac{1}{2(p-1)!} \sum_{j=0}^{p-1} \binom{p-1}{j}\frac{(-1)^j}{j+1} \ln^{p-1-j}(4z) \left(\theta\ln^{j+1}(2\sin\theta)-\int_0^\theta \ln^{j+1}(2\sin x)dx\right)\\
&=\frac{1}{2(p-1)!}\theta \ln^p(2\sin\theta)\sum_{j=0}^{p-1} \binom{p-1}{j}\frac{(-1)^j}{j+1}+\frac{1}{4p!} \sum_{j=1}^p (-1)^{j-1} \binom{p}{j} \ln^{p-j}(2\sin\theta)\Ls_{j+1}(2\theta).
\end{align*}
Then, noting the fact that
\[\sum_{j=0}^{p-1} \binom{p-1}{j}\frac{(-1)^j}{j+1}=\frac1{p},\]
we obtain the desired evaluation.
\end{proof}

Setting $p\leq 3$ in \eqref{equ-int-log-sin-1}, we deduce {\color{black}(with $\theta:=\arcsin(2z)$)}
\begin{align}\label{equ-paulExample}
&\sum_{n=0}^\infty \binom{2n}{n} \frac{z^{2n+1}}{2n+1}=\frac1{2}\theta,\\
&\sum_{n=0}^\infty \binom{2n}{n} \frac{z^{2n+1}}{(2n+1)^2}=\frac1{2}\theta\ln(2\sin\theta)+\frac1{4}\cl_2(2\theta),\\
&\sum_{n=0}^\infty \binom{2n}{n} \frac{z^{2n+1}}{(2n+1)^3}=\frac{1}{4}\theta\ln^2(2\sin\theta)+\frac1{4}\ln(2\sin\theta)\cl_2(2\theta)-\frac{1}{8}\Ls_3(2\theta),
\end{align}
where we used the relation $\Ls_2(\theta)=\cl_2(\theta)$ and for any positive integer $m$ the Clausen function $\cl_m(\theta)$ is defined 
 as follows, for all $\theta \in [0, \pi]$: 
\begin{align*}
 &{\cl}_{2m-1}(\theta):=\sum_{n=1}^\infty \frac{\cos(n\theta)}{n^{2m-1}}\quad \text{and}\quad {\cl}_{2m}(\theta) := 
 \sum_{n = 1}^\infty \frac{\sin(n\theta)}{n^{2m}}. 
\end{align*}

If we replace $z$ by $\frac{\sin \theta}{2}$ in \eqref{equ-paulExample}, divide by $\sin \theta$, multiply by $\ln (\sin \theta)$ and integrate between 0 and $\frac{\pi}{2}$, we get the following result:
\begin{align*}
\int_0^{\pi/2} \frac{\theta}{\sin \theta} \ln (\sin \theta)\, {d\theta} & = \frac{\pi}{2}\sum_{k=0}^\infty \left( \frac{1}{16} \right)^{k} \binom{2k}{k}^2 (O_k - \frac{1}{2}H_k - \ln 2) \frac{1}{2k+1} \\
& = - 4 \mathcal{G} + \frac{\pi^3}{32} + \frac{\pi \ln^2 (2)}{8}
\end{align*}
using \eqref{Paulbrilliant}. Note that using integration techniques from \cite{Campbell2018} in combination with \eqref{afterJay}, we
 can prove the related result: $$\int_0^{\pi/2} \frac{\theta}{\sin \theta} \ln (\cos \theta) d \theta 
 = 2 G \ln (2)+4 \mathcal{G} -\frac{5 \pi ^3}{32}-\frac{ \pi \ln ^2(2)}{8}. $$

\begin{remark}
 The series in \eqref{mainproblem} may be reduced to a $\mathbb{Q}$-combination of the series treated in \cite{XuZhao2022}. We offer a sketch of a proof,
 as below, based on this alternate approach.
\end{remark}
First, for all $k \in \mathbb{N}$ and $\bfs=(s_1,\dots,s_d)\in \mathbb{N}^d$ we define the multiple $t$-sums as
\begin{align*}
t_k(\bfs):= \sum_{k\ge k_1>\dots>k_d>0} \frac{1}{(2k_1-1)^{s_1}\cdots (2k_d-1)^{s_d}}.
\end{align*}
They satisfy the stuffle relations. For example,
\begin{align*}
 t_k(1)^2= 2t_k(1,1)+ t_k(2).
\end{align*}
 Since $O_k=t_k(1)$ the $\gl=1$ case of (2) is
\begin{align*}
\sum_{k>0} \left( \frac{1}{16} \right)^{k} \binom{2k}{k}^2\frac{O_k^2}{(2k-1)^2}
=2\sum_{k>0} \left( \frac{1}{16} \right)^{k} \binom{2k}{k}^2\frac{t_k(1,1)}{(2k-1)^2}
+\sum_{k>0} \left( \frac{1}{16} \right)^{k} \binom{2k}{k}^2\frac{t_k(2)}{(2k-1)^2}.
\end{align*}
 Now, for all $\bfs=(s_1,\dots,s_d) \in \mathbb{N}^d$ and $m \in \mathbb{N}$,
\begin{align*}
&\, \sum_{k>0} \left( \frac{1}{16} \right)^{k} \binom{2k}{k}^2\frac{t_k(\bfs)}{(2k-1)^m} \\
=&\, \sum_{k>k_2>\cdots>k_d>0} \left( \frac{1}{16} \right)^{k} \binom{2k}{k}^2\frac{1}{(2k-1)^{s_1+m}(2k_2-1)^{s_2}\cdots (2k_d-1)^{s_d}} \\
&\, +\sum_{k>k_1>\cdots>k_d>0} \left( \frac{1}{16} \right)^{k} \binom{2k}{k}^2\frac{1}{(2k-1)^{m}(2k_1-1)^{s_1}\cdots (2k_d-1)^{s_d}},
\end{align*}
 and this can be handled by the approach in \cite{XuZhao2022}.
 It is not hard to see that this above procedure can be generalized to treat arbitrary powers of $O_k$, not only its square.

 Our last result, Theorem~\ref{thm-Varz}, differs noticeably from the others  in two aspects: First, the binomial coefficients appear as a first power 
 instead of a square; second, the series involves a variable $z$ so that its specializations may offer many series identities involving log-sine integrals. For 
 example, a harmonic analog of Theorem~\ref{thm-Varz} has been found in Charlton et al. \cite[Lemma 2.1]{CGLXZ2022} who use them to prove the 
 following identity 
\begin{align*}
 \sum_{n=0}^\infty\frac{\binom{2n}{n}}{(2n+1)^3 16^n}\left(9H_{2n+1}+\frac{32}{2n+1}\right)=40\beta(4)+\frac{5}{12}\pi \zeta(3) 
\end{align*}
first conjectured by Z.-W. Sun \cite[Conjecture 10.60]{Sun2021}. It is highly likely that Theorem~\ref{thm-Varz} can be further generalized to evaluate other Ap\'ery-like sums in closed forms.

\subsection*{Competing interests statement}
 There are no competing interests to declare.

\subsection*{Acknowledgements}
Ce Xu is supported by the National Natural Science Foundation of China (Grant No. 12101008), the Natural Science Foundation of Anhui Province (Grant No. 2108085QA01), and the University Natural Science Research Project of Anhui Province. The corresponding Grant Number for the last case is KJ2020A0057. J. Zhao is supported by the Jacobs Prize from The Bishop's School.
 The authors are very grateful for the referee feedback provided, which has substantially improved our article.

\end{document}